\documentclass[12pt,a4paper]{amsart}
\usepackage[utf8]{inputenc}
\usepackage[T1]{fontenc}
\usepackage[UKenglish]{babel}
\usepackage[a4paper,margin=1in]{geometry}

\usepackage{amsmath,amsthm,amsfonts,amssymb}
\usepackage{mathrsfs,dsfont}
\usepackage{graphicx}
\usepackage{url}
\usepackage{verbatim}
\usepackage[dvipsnames]{xcolor}

\usepackage{import}
\usepackage{xifthen}
\usepackage{pdfpages}
\usepackage{transparent}

\usepackage{hyperref}
\usepackage{mathtools}

\usepackage{bbm}

\usepackage{marginnote}

\marginparwidth50pt %this avoids the marginal notes to invade the text

\graphicspath{ {./figs/} }

\makeatletter
\def\namedlabel#1#2{\begingroup
	#2%
	\def\@currentlabel{#2}%
	%\phantomsection
	\label{#1}\endgroup
}
\makeatother
%\namedlabel{label-for-x-ref}{appears as}

\DeclarePairedDelimiter\norm{\lVert}{\rVert}%

% Theorems etc.
\theoremstyle{plain}
\newtheorem{theorem}{Theorem}[section]
\newtheorem{corollary}[theorem]{Corollary}
\newtheorem{lemma}[theorem]{Lemma}

\newtheorem{proposition}[theorem]{Proposition}

\theoremstyle{definition}

% Numbering Commands
\numberwithin{equation}{section}

\renewcommand\labelenumi{\textup{\alph{enumi})}}

\renewcommand\theenumi\labelenumi%{\textup{\alph{enumi})}}
%{\textup{\alph{enumi})}}
\makeatletter\renewcommand{\p@enumii}{}\makeatother %\ref concatenates the \theenumi's with
%prefixes. This command empties the list See https://www.dickimaw-books.com/latex/admin/html/enumextend.shtml

% Renewcommands
\renewcommand{\leq}{\leqslant}

\renewcommand{\geq}{\geqslant}

%Typographical

%Math symbols

%calligraphic letters

\newcommand{\nI}{\textup{I}}
%double-stroke letters
\newcommand{\R}{\mathds{R}}

\newcommand{\I}{\mathds{1}}

\newcommand{\pr}{\mathbf{P}}

\newcommand{\ex}{\mathbf{E}}

\begin{document}
	
	\title[Ground state for Schr\"odinger operator]
	{Ground state decay for Schr\"odinger operators with confining potentials}
	
	\date{\today}
	
	\author[M.~Baraniewicz]{Miłosz Baraniewicz}
	\address[M.~Baraniewicz]{Faculty of Pure and Applied Mathematics\\ Wroc{\l}aw University of Science and Technology\\ ul. Wybrze{\.z}e Wyspia{\'n}skiego 27, 50-370 Wroc{\l}aw, Poland}
	\email{milosz.baraniewicz@pwr.edu.pl}

		\begin{abstract}
		We give two-sided estimates of a ground state for Schr\"odinger operators with confining potentials. We propose a semigroup approach, based on resolvent and the Feynman--Kac formula, which leads to a new, rather short and direct proof. Our results take the sharpest form for slowly varying, radial and increasing potentials.
	\medskip

\noindent
\emph{Key-words}: eigenfunction, slowly varying potentials, Feynman--Kac formula, heat kernel, integral kernel, resolvent

\medskip

\noindent
2010 {\it MS Classification}: 47D08, 60J65, 35K05, 26A12\\
	\end{abstract}
	
	\footnotetext{ 
			Research was supported by National Science Centre, Poland, grant no. 2019/35/B/ST1/02421.
	}

	\maketitle
	
	\section{Introduction}
	The goal of the paper is to give the upper and the lower estimate at inifnity of the ground state eigenfunction for the Schr\"odinger operator 
	\[
	H = -\Delta+V, \quad \text{acting in} \ L^2(\R^d), \ d \geq 1,
	\]
	where $\Delta$ is the classical Laplacian and $V:\R^d \to [0,\infty)$ is a confining (i.e.\ $V(x) \to \infty$ as $|x| \to \infty$), locally bounded potential. 
	The spatial behaviour of eigenfunctions for Schrödinger operators involving the Laplacian and more general second order differential operators are now a classical topic, see Agmon \cite{Agmon,Agmon2}, Reed and Simon \cite{Reed-Simon},
and Simon \cite{Simon}.  There is a huge literature concerning the estimates of ground state eigenfunctions of $H$ with confining potentials. Here we refer the reader to the celebrated works by Carmona and Simon \cite{Carmona-Simon}, Carmona \cite{Carmona} and the other contributions discussed and quoted in these papers. The sharpest known result (see \cite[Section 4]{Davies-book}) was obtained for the power-type potential $V(x)= |x|^{2\beta}$, where $\beta>1$, and it says that the ground state eigenfunction $\varphi_0(x)$ is comparable at infinity to the function
\[
|x|^{-\beta/2+(d-1)/2} \exp\left(-\frac{1}{1+\beta} |x|^{1+\beta}\right). 
\]
Observe that $|x|^{1+\beta} = \sqrt{V(x)} |x|$. The best pointwise estimates that apply to more general continuous confining potentials were obtained in \cite{Carmona-Simon}. These bounds are given in terms of the Agmon distance and for sufficiently regular potentials they are sharp enough to ensure that
\begin{align} \label{eq:lim_gs}
\lim_{|x| \to \infty} \frac{-\log \varphi_0(x)}{\varrho(x)} = 1,
\end{align}
for an explicit function $\varrho$. We note in passing that when $\Delta$ is replaced with a non-local L\'evy operator $L$, then more is known -- see \cite{Kaleta-Lorinczi} for sharp two-sided estimates of $\varphi_0$ for a large class of $L$'s and $V$'s. 

The following theorem is the main result of this paper. 
	
	\begin{theorem}\label{th: ground state bound}
			Let $\varphi_0$ be the ground state of the Schr\"odinger operator $H=-\Delta+V$ with locally bounded, confining potential $V:\R^d \to [0,\infty)$. Then the following hold. 	
		\begin{itemize} 			
			\item[(I)] For every $\varepsilon >0$ and $\delta \in (0,1)$ there exist $c , r>0$ such that
			\begin{equation*}
				\varphi_0(x) \geq  c   \exp \left(-(1 + \varepsilon)\sqrt{ V^{\delta}(x)} |x| \right), \quad |x| \geq r,
			\end{equation*}
			where 
					\begin{equation} \label{eq:upper_prof}
				V^{\delta}(x):= \sup\limits_{z \in B_{|x|+\delta}(0)} V(z).
			\end{equation}
			\item[(II)] For every $\varepsilon, \delta \in (0,1)$ there exist $c, r>0$ such that 
			\begin{equation*}
				\varphi_0(x) \leq c  \exp\left(- (1-\varepsilon ) \delta \sqrt{ V_{\delta}(x)} \, |x|\right), \quad  |x| \geq r,
			\end{equation*}
			where
					\begin{equation} \label{eq:lower_prof}
				V_{\delta}(x) := \inf\limits_{z \in B_{\delta|x|}(x)} V(z).
			\end{equation}
		\end{itemize}
	\end{theorem}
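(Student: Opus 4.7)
The plan is to build everything on the Feynman--Kac martingale identity. Let $(X_t)_{t\ge 0}$ be Brownian motion on $\R^d$ with generator $\Delta$, so that its transition density is $p_t(x,y)=(4\pi t)^{-d/2}\exp(-|x-y|^2/(4t))$. The eigenvalue equation $(e^{-tH}\varphi_0)(x)=e^{-\lambda_0 t}\varphi_0(x)$ is precisely the Feynman--Kac formula applied to $\varphi_0$, so
\[
M_t:=\exp\!\Big(\lambda_0 t-\int_0^t V(X_s)\,ds\Big)\varphi_0(X_t)
\]
is a $\Pp^x$-martingale. Optional stopping yields, for every admissible stopping time $\tau$,
\[
\varphi_0(x)=\Ee^x\!\left[\exp\!\left(\lambda_0\tau-\int_0^\tau V(X_s)\,ds\right)\varphi_0(X_\tau)\right],
\]
and both bounds follow from a judicious choice of $\tau$.

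For the upper bound (II), I would pick $\tau=\tau_R$, the exit time of $X$ from $B_R(x)$ with $R=\delta|x|$. On $\{s\le\tau_R\}$ the path lies in $B_R(x)$, where $V\ge V_\delta(x)$, and $\varphi_0$ is bounded by standard arguments for locally bounded, confining potentials (intrinsic ultracontractivity). Hence
\[
\varphi_0(x)\le\|\varphi_0\|_\infty\,\Ee^x\!\left[e^{-(V_\delta(x)-\lambda_0)\tau_R}\right].
\]
Comparing the Laplace transform of $\tau_R$ with the radial solution of $\Delta u=\alpha u$ on $B_R(x)$ with $u\equiv 1$ on $\partial B_R(x)$ gives $\Ee^x[e^{-\alpha\tau_R}]\le C e^{-\sqrt{\alpha}R}$. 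With $\alpha=V_\delta(x)-\lambda_0$ and $V_\delta(x)\to\infty$, this delivers decay at rate $\delta\sqrt{V_\delta(x)}|x|$; the factor $(1-\varepsilon)$ absorbs $\lambda_0$ and the constant $C$.

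For the lower bound (I), I would take $\tau=t$ deterministic and restrict to the favourable event
\[
A_t:=\{X_s\in B_{|x|+\delta}(0)\text{ for all }s\in[0,t]\}\cap\{X_t\in K\},
\]
where $K$ is a fixed ball around the origin on which $\varphi_0\ge c_0>0$ (available by strict positivity and continuity of the ground state). On $A_t$ one has $V(X_s)\le V^\delta(x)$, so
\[
\varphi_0(x)\ge c_0\,e^{(\lambda_0-V^\delta(x))t}\,\Pp^x(A_t).
\]
A lower bound $\Pp^x(A_t)\ge c\,e^{-(1+\eta)|x|^2/(4t)}$ for arbitrary $\eta>0$ and large $|x|$, combined with the optimal choice $t^\star\asymp|x|/\sqrt{V^\delta(x)}$, then yields exponent $\sqrt{V^\delta(x)}|x|$; the factor $(1+\varepsilon)$ absorbs $\eta$ and all sub-exponential prefactors.

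The main obstacle is precisely this lower bound on $\Pp^x(A_t)$. The point $x$ lies at distance only $\delta$ from $\partial B_{|x|+\delta}(0)$, so the Dirichlet heat kernel on that ball loses a boundary factor near $x$; and when $V$ grows slowly, $t^\star$ is large while the permitted slab has fixed width $\delta$, so Brownian excursions of order $\sqrt{t^\star}$ must be controlled carefully. I would address this by either prepending a short Brownian step that moves $X$ from $x$ to an interior point and then applying a Dirichlet kernel bound, or by a direct tube/small-ball estimate along the segment joining $x$ to the centre of $K$; either route produces only sub-exponential loss, which is absorbed in the $\varepsilon$-slack in the exponent.
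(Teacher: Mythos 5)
Your strategy is sound and the two bounds you sketch are correct in outline, but the packaging is genuinely different from the paper's. For the upper bound, the paper does not use optional stopping of the Feynman--Kac martingale; it first proves a pointwise bound on the heat kernel itself, $u_t(x,y)\le c\,\exp\big(-(\tfrac{1}{a}V_{\delta}(x)t\wedge(1-\varepsilon)\delta\sqrt{V_{\delta}(x)}\,|x|)\big)g_{at}(x,y)$, by splitting on whether the path exits $B_{\delta|x|}(x)$ before time $t/a$ and applying H\"older, and then feeds a deterministic time $t=|x|/(A\sqrt{W(x)})$ into $e^{-\lambda_0 t}\varphi_0=U_t\varphi_0$. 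The key probabilistic input is the same as yours -- the sharp Laplace transform $\ex_x[e^{-\alpha\tau_{B_R(x)}}]\le C_\varepsilon e^{-(1-\varepsilon)\sqrt{\alpha}R}$ -- and your route via $\varphi_0(x)=\Ee^x[M_{\tau_R}]$ is shorter and exact (the radial solution of $\Delta u=\alpha u$ with boundary value $1$ gives $\Ee^x[e^{-\alpha\tau_R}]$ precisely, and the Bessel asymptotics supply the $(1-\varepsilon)$). One small correction: boundedness of $\varphi_0$ does not come from intrinsic ultracontractivity, which may fail for slowly growing confining potentials (a point the paper stresses); it follows already from $u_t\le g_t$ and $\varphi_0=e^{\lambda_0}U_1\varphi_0$. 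For the lower bound, the paper avoids optimizing over $t$ by hand: it integrates $e^{-\lambda_0 t}\varphi_0=U_t\varphi_0$ in $t$ over $(0,\infty)$, restricts $y$ to $B_1(0)$, and recognizes the resulting $\int_0^\infty e^{-W(x)t}g_t(x,y)\,dt$ as the resolvent kernel $r_{W(x)}(x-y)$, whose Bessel asymptotics deliver $e^{-(1+\varepsilon)\sqrt{W(x)}|x|}$ directly. Your deterministic-time optimization achieves the same exponent, but the resolvent formulation absorbs all the prefactor bookkeeping automatically.

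The one substantive step you leave open is exactly the crux: the uniform lower bound $\Pp^x(A_t)\ge c\,e^{-(1+\eta)|x|^2/(4t)}$ for the event that the path stays in $B_{|x|+\delta}(0)$ and lands in $K$. The paper settles this by quoting the two-sided Dirichlet heat kernel estimate of Ma{\l}ecki--Serafin for the ball: the boundary factor at $x$ is $1\wedge\frac{(r-|x|)(r-|y|)}{t}\ge e^{-t/\delta^2}$ (with $r=|x|+\delta$), and the eigenvalue correction is $e^{-\mu_0 t/(|x|+\delta)^2}$; both are $o(V^{\delta}(x)t)$ in the exponent at the relevant times, hence absorbable into $\varepsilon$. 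Your proposed tube estimate of width $\delta$ along the segment from $x$ to the origin would also work (the small-ball cost $e^{-\mu t/\delta^2}$ is likewise subexponential relative to $\sqrt{V^{\delta}(x)}\,|x|$ at $t^\star\asymp|x|/\sqrt{V^{\delta}(x)}$, precisely because $V^{\delta}(x)\to\infty$), but as written this step is a sketch rather than a proof, and it is the place where a careless choice (e.g.\ a tube of width $o(1)$, or ignoring the boundary decay at $x$) would destroy the sharp constant in the exponent.
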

	
	\noindent
Note that for every $\delta \in (0,1)$ we have $V^{\delta}(x), V_{\delta}(x) \to \infty$ as $|x| \to \infty$.

The proof of Theorem \ref{th: ground state bound} uses the semigroup-of-operators and resolvent techniques. It is inspired by the qualitatively sharp two-sided estimates of integral kernels for Schr\"odinger semigroups which we have obtained just recently in \cite{Baraniewicz-Kaleta}. We propose an argument which allows one to derive the lower and the upper estimate of the ground state from the respective estimate of the integral kernel of the semigroup $\big\{e^{-tH}: t \geq 0 \big\}$, without loosing too much information. Our approach leads to rather short and direct proofs. We believe this method can also be used in different settings, for different types of potentials.  
	
	Estimates in Theorem \ref{th: ground state bound} take the sharpest form for radial and increasing potentials which grow at infinity not too fast.   
		\begin{corollary}\label{cr: slowly varying}
		Let $V(x) = g(|x|)$ for some increasing function $g:[0, \infty) \to [0, \infty)$ such that $g(r) \to \infty$ as $r \to \infty$.
		Then the following hold.
		\begin{itemize}
			\item[(I)] If for every $\varepsilon >0$ there exist $\delta, t_0 >0$ satisfying
			\[
			g((1 + \delta) t) \leq (1 + \varepsilon) g(t), \quad t > t_0,
			\] 
			then there are $c,r > 0$ such that
			\begin{equation*}
				\varphi(x) \geq c   \exp \left(-(1 + \varepsilon)\sqrt{ V(x)} |x| \right), \quad |x| \geq r.
			\end{equation*}
			\item[(II)] If for every $\varepsilon, \delta \in (0,1)$ there exists $t_0>0$ satisfying 
			\[
			g((1 - \delta) t) \geq (1 - \varepsilon) g(t), \quad t > t_0,
			\]
			then there are $c, r > 0$ such that
			\begin{equation*}
				\varphi(x) \leq c   \exp \left(-(1 - \varepsilon)\sqrt{ V(x)} |x| \right), \quad |x| \geq r.
			\end{equation*}
		\end{itemize}
	If the assumptions in \textup{(I)} and \textup{(II)} are satisfied, then \eqref{eq:lim_gs} holds with $\varrho(x) = \sqrt{ V(x)} |x|$. 
		
	In particular, if $g$ is a function slowly varying at infinity, i.e.\ for every $\lambda>0$, 
	\[
		\lim\limits_{r \to \infty }\frac{g(\lambda r)}{g(r)} = 1,
	\]
	then \textup{(I)} and \textup{(II)} hold. Moreover, if $g$ grows at infinity at most linearly fast, e.g.
	\[
	g(r) = ar^{\alpha}+b, \quad \text{with} \ \alpha > 0 \ \text{and} \ a,b>0,
	\]
	then \textup{(I)} holds as well.
	\end{corollary}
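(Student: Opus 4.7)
The plan is to deduce both parts directly from Theorem~\ref{th: ground state bound} by translating the ball suprema/infima defining $V^\delta$ and $V_\delta$ into values of $g$ at a single point, using the radial monotonicity of $V$, and then invoking the scaling hypothesis to compare to $V(x) = g(|x|)$. For part (I), since $g$ is increasing, $V^\delta(x) \le g(|x|+\delta)$ for every $x$ and $\delta \in (0,1)$; for $|x|$ large, $|x|+\delta \le (1+\eta)|x|$ for any preassigned $\eta > 0$. Given a small $\varepsilon' > 0$, the hypothesis (I) of the corollary supplies an $\eta > 0$ such that $g((1+\eta)t) \le (1+\varepsilon')g(t)$ for $t$ large, so $V^\delta(x) \le (1+\varepsilon')V(x)$. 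Plugging into Theorem~\ref{th: ground state bound}(I) with, say, $\delta = 1/2$ fixed and a small $\varepsilon_0 > 0$ yields
\[
\varphi_0(x) \ge c \exp\bigl(-(1+\varepsilon_0)\sqrt{1+\varepsilon'}\,\sqrt{V(x)}\,|x|\bigr),
\]
and choosing $\varepsilon_0, \varepsilon'$ small enough that $(1+\varepsilon_0)\sqrt{1+\varepsilon'} \le 1 + \varepsilon$ completes (I).

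For part (II), for $z \in B_{\delta|x|}(x)$ one has $|z| \ge (1-\delta)|x|$, so monotonicity of $g$ gives $V_\delta(x) \ge g((1-\delta)|x|)$. Pick $\delta \in (0,1)$ close to $1$ and apply hypothesis (II) with this $\delta$ and a small $\varepsilon' > 0$ to obtain $g((1-\delta)|x|) \ge (1-\varepsilon') V(x)$ for $|x|$ large. Substituting into Theorem~\ref{th: ground state bound}(II) produces
\[
\varphi_0(x) \le c \exp\bigl(-(1-\varepsilon_0)\,\delta\,\sqrt{1-\varepsilon'}\,\sqrt{V(x)}\,|x|\bigr),
\]
and $(1-\varepsilon_0)\,\delta\,\sqrt{1-\varepsilon'} \ge 1 - \varepsilon$ is achievable by taking $\delta$ close enough to $1$ and $\varepsilon_0, \varepsilon'$ small enough.

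The limit~\eqref{eq:lim_gs} with $\varrho(x) = \sqrt{V(x)}|x|$ follows by taking logarithms of the two bounds and letting $\varepsilon \to 0$. For slowly varying $g$ the definition $\lim g(\lambda r)/g(r) = 1$ applied with $\lambda = 1 \pm \delta$ yields both hypotheses directly. For $g(r) = a r^\alpha + b$ the ratio $g((1+\delta)r)/g(r) \to (1+\delta)^\alpha$, which can be made arbitrarily close to $1$ by picking $\delta$ small for any fixed $\alpha > 0$, so hypothesis (I) holds. The main point to watch is the factor $\delta$ in front of the square root in Theorem~\ref{th: ground state bound}(II): it forces $\delta$ close to $1$, so that the ball $B_{\delta|x|}(x)$ nearly reaches the origin and $V_\delta(x)$ could a priori be far smaller than $V(x)$. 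The slow-variation-type hypothesis in (II) is precisely what compensates for this potential loss, and is the reason why (II) is strictly more restrictive than (I).
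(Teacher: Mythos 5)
Your proposal is correct and follows essentially the same route as the paper: reduce $V^\delta(x)$ and $V_\delta(x)$ to $g(|x|+\delta)\le g((1+\eta)|x|)$ and $g((1-\delta)|x|)$ via the radial monotonicity of $g$, invoke the scaling hypotheses to compare with $V(x)$, and feed the result into Theorem~\ref{th: ground state bound}, taking $\delta$ close to $1$ in part (II) to absorb the extra factor $\delta$. Your write-up is in fact slightly more careful than the paper's (explicit $\varepsilon$-bookkeeping and verification of the slowly varying and power-law cases), and there is nothing to correct.
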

We remark that our results for radial, slowly increasing and continuous potentials can also be derived from the estimates obtained in \cite{Carmona-Simon}. However, when the potential grows at infinity very slow, our bounds seem to be more accessible.

The properties of Schr\"odinger operators and the corresponding semigroups in both local and non-local settings are rather well understood for potentials which grow fast at infinity; this is related to the \emph{intrinsic ultracontractivity} which provides a lot of regularity \cite{Davies-Simon}. On the other hand, the theory for potentials slowly growing or decaying at infinity just started to shape up. This is now a very active area of research in probabilistic potential theory, see e.g.\ \cite{ Baraniewicz-Kaleta,Chen-Wang1, Chen-Wang2, Kaleta-Schilling} (see also recent contribution to the theory of classical Schr\"odinger semigroups with singular potentials \cite{Bogdan-Dziubanski-Szczypkowski,Cho-Song,Jakubowski-Szczypkowski}).  We believe that our work will turn out to be a valuable contribution to this field both in terms of results and methods.

	\subsection*{Acknowledgement} I want to thank Kamil Kaleta for all the help and discussions during preparation of this paper.

	\section{Lower bound}
	
	Recall that the Schr\"odinger semigroup $\big\{e^{-tH}: t \geq 0\big\}$ consists of integral operators, i.e.\ there exists a continuous function $(0, \infty)  \times \R^2 \ni (t,x,y) \mapsto u_t(x,y)$ such that
	\begin{equation*}
	U_tf(x) = e^{-tH} f(x) = \int_{\R^d} u_t(x,y) f(y) dy, \quad f \in L^2(\R^d,dx), \ \ t>0.
	\end{equation*}
	We have $u_t(x,y) = u_t(y,x)$ and $0 <  u_t(x,y) \leq g_t(x,y)$, for all $x,y \in \R^d$ and $t>0$, where 
	\[
	g_t(x,y) = g_t(y-x) = \frac{1}{(4\pi t)^{d/2}}\exp\left(-\frac{|y-x|^2}{4t}\right).
	\]
 is the standard Gauss--Weierstrass kernel.
	Let $\varphi_0$ be the \emph{ground state} of $H$ i.e.\ the  unique  eigenfunction corresponding to  eigenvalue $\lambda_0 := \inf \sigma (H)$ > 0. It is known that $\varphi_0$ is bounded, continuous and strictly positive on $\R^d$. Clearly, we have 
	\begin{equation}\label{eq: ground state}
 e^{-\lambda_0 t}  \varphi_0 (x) = U_t \varphi_0(x),  \quad x \in \R^d,  \ t>0.
	\end{equation}	
 For more information on Schr\"odinger semigroups we refer the reader to \cite{Demuth-Casteren, Reed-Simon, Simon}. 
	
The resolvent  or $\lambda$-potential operator corresponding to Laplacian $\Delta$ is defined by 
	\begin{equation}\label{eq:resolvent}
		R_{\lambda}u(x) := \int_0^{\infty} \int_{\R^d} u(y) g_t (y-x) e^{-\lambda t} \, dy \, dt , \quad \lambda >0, \ x \in \R^d,
	\end{equation}
	for every nonnegative or bounded Borel function $u$. It is a convolution operator with the kernel
	\begin{equation*}
		r_{\lambda}(y) = \int_{0}^{\infty} e^{- \lambda t} g_{t}(y) dt, \quad y \neq 0.
	\end{equation*} 
	We remark that the operator $\Delta$ is the infinitesimal generator of the process $(X_t)_{t \geq 0}$, where $X_t=B_{2t}$ and $(B_t)_{t\geq0}$ is the standard Brownian motion with the variance $\sqrt{t}$ (or, equivalently, with generator $(1/2)\Delta$). Clearly, the resolvent kernel 
$\widetilde r_{\lambda}(y)$ corresponding to the operator $(1/2)\Delta$ is connected to $r_{\lambda}(y)$ through the relation 
\[
\widetilde r_{\lambda}(y) = \int_{0}^{\infty} e^{- \lambda t} g_{t/2}(y) dt 
                            = 2 \int_{0}^{\infty} e^{- 2\lambda t} g_{t}(y) dt = 2 r_{2\lambda}(y).
\]
	
%	We start by stating asymptotically correct lower bound of $r_{\lambda}$
	It is known (see e.g.\ \cite[Example 7.14]{Schilling}) that
		\begin{equation*}
			 \widetilde r_{\lambda}(y)  = \frac{1}{\pi^{d/2}} \left(\frac{\sqrt{2\lambda}}{2|y|}\right)^{\tfrac{d}{2}-1} K_{\tfrac{d}{2}-1} \left(\sqrt{2\lambda}|y| \right)
		\end{equation*}
		where $K_v$ is the modified Bessel function of the second kind \cite[10.25]{NIST:DLMF}. By \cite[10.25.3]{NIST:DLMF}, we have
		\begin{equation*}
			 \Bigg|\sqrt{\frac{2r}{\pi}}K_v(r)e^r - 1 \Bigg| \xrightarrow{r \to \infty} 0,
		\end{equation*}
		which implies that for every $\varepsilon >0$ there exist $\rho, c >0$ such that for $|y| \geq \rho$  and $\lambda \geq 1$ we have 
		\begin{equation}\label{ieq: resolvent lower bound} 
		r_\lambda(y) \geq c e^{-(1 + \varepsilon) \sqrt{\lambda} |y|}.
		\end{equation}
	
	 The following proposition is the key observation in this section. It allows one to derive the lower bound for the ground state $\varphi_0(x)$ from a certain form of the lower estimate of the kernel $u_t(x,y)$. The proof uses the resolvent. 
	
	\begin{proposition}\label{prop: lower bound}
		Let $W: \R^d \to [0, \infty)$ be a function such that $W(x) \xrightarrow{|x| \to \infty} \infty$. Assume that there exist $c_1, \rho_1 > 0 $ such  that  for $|x| > \rho_1$, $y \in B_1(0)$  and $t>0$  we have
		\begin{equation}\label{ieq: lower bound assumption}
				u_t(x,y) \geq c_1 \exp\left(- W(x)t\right) g_{t}(x,y).
		\end{equation} 
		Then for every $\varepsilon >0$ there are $c_2= c_2(\varepsilon)>0$ and $\rho_2 = \rho_2(\varepsilon)>0$ such  that  for $|x| > \rho_2$ we have
		\begin{equation}\label{ieq: lower bound thesis}
				\varphi_0(x) \geq c_2  e^{-(1 + \varepsilon)\sqrt{ W(x)} |x|}.
		\end{equation} 
	\end{proposition}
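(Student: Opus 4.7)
My plan is to Laplace-transform the eigenvalue identity \eqref{eq: ground state}, thereby converting the pointwise kernel estimate \eqref{ieq: lower bound assumption} into a lower bound phrased through the resolvent kernel $r_\mu$, for which the Bessel asymptotics \eqref{ieq: resolvent lower bound} already deliver a sharp exponential rate. Fixing any $\lambda>0$, multiplying $U_t\varphi_0(x) = e^{-\lambda_0 t}\varphi_0(x)$ by $e^{-\lambda t}$ and integrating in $t\in(0,\infty)$ gives, by Fubini (everything is nonnegative),
\[
\varphi_0(x) \;=\; (\lambda+\lambda_0)\int_{\R^d}\varphi_0(y)\int_0^\infty e^{-\lambda t}u_t(x,y)\,dt\,dy.
\]

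I would then restrict the $y$-integral to $B_1(0)$ and plug in \eqref{ieq: lower bound assumption}, which for $|x|>\rho_1$ yields
\[
\varphi_0(x) \;\geq\; c_1(\lambda+\lambda_0)\int_{B_1(0)}\varphi_0(y)\,r_{\lambda+W(x)}(x-y)\,dy.
\]
Since $W(x)\to\infty$, for sufficiently large $|x|$ one has $\lambda+W(x)\geq 1$ and $|x-y|\geq|x|-1\geq\rho$ uniformly in $y\in B_1(0)$, so picking an auxiliary $\varepsilon'\in(0,\varepsilon)$ and applying \eqref{ieq: resolvent lower bound} gives
\[
r_{\lambda+W(x)}(x-y) \;\geq\; c\,\exp\!\Big(-(1+\varepsilon')\sqrt{\lambda+W(x)}\,(|x|+1)\Big).
\]
Because $\varphi_0$ is continuous and strictly positive, $\int_{B_1(0)}\varphi_0(y)\,dy>0$, so the two estimates combine into a bound of the form $C\exp\!\big(-(1+\varepsilon')\sqrt{\lambda+W(x)}(|x|+1)\big)$.

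The remaining step, and the only one requiring genuine care, will be to absorb the spurious factors into the target exponent $(1+\varepsilon)\sqrt{W(x)}\,|x|$. Writing
\[
(1+\varepsilon')\sqrt{\lambda+W(x)}\,(|x|+1) \;=\; (1+\varepsilon')\sqrt{1+\tfrac{\lambda}{W(x)}}\,\Big(1+\tfrac{1}{|x|}\Big)\,\sqrt{W(x)}\,|x|,
\]
I observe that since $W(x)\to\infty$ both correction factors tend to $1$, so choosing $\varepsilon'$ small enough and then $|x|$ large enough produces \eqref{ieq: lower bound thesis}. I do not expect any substantial obstacle beyond this bookkeeping: the Laplace-transform identity transfers the semigroup estimate to the resolvent with no loss in the exponent, and the Bessel asymptotics do the rest.
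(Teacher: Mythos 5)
Your proposal is correct and follows essentially the same route as the paper's proof: integrate the eigenvalue identity in $t$ to turn the assumed kernel bound into a lower bound by the resolvent kernel on $B_1(0)$, then apply the Bessel asymptotics \eqref{ieq: resolvent lower bound} and absorb the error factors by shrinking $\varepsilon'$. The only (harmless) differences are your extra damping $e^{-\lambda t}$ (the paper takes $\lambda=0$, using $\lambda_0>0$) and your use of $|x-y|\leq |x|+1$ in the exponent where the paper instead invokes monotonicity of $r_{W(x)}(\cdot)$ at the point $(1+\varepsilon')x$.
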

	
	\begin{proof}

		Fix $\varepsilon >0$. Integrating on both sides of  \eqref{eq: ground state} with  $\int_0^{\infty}  (\ldots)  dt$  and using Tonelli's theorem yield
		\begin{equation*}
			\varphi_0(x) = \lambda_0 \int_0^{\infty}  U_t \varphi_0(x) \, dt \geq \lambda_0 \int_{B_1(0)} \int_0^{\infty} u_t(x,y) \varphi_0(y) \, dt \,dy, \quad x\in \R^d.
		\end{equation*}
Further, by applying \eqref{ieq: lower bound assumption}, we get for $|x| > \rho_1$
		\begin{align*}
			\varphi_0(x) & \geq c_1 \lambda_0 \int_{B_1(0)} \int_0^{\infty} \exp\left(- W(x)t\right) g_t(x,y) \varphi_0(y) \, dt \, dy \\
			             &  =  c_1 \lambda_0  \int_{B_1(0)} r_{ W(x)}(y-x)\varphi_0(y) \, dy.
		\end{align*} 
		Let $\varepsilon'>0$ be such  that  $(1+\varepsilon')^2 = (1+ \varepsilon)$. We choose $ \rho_2>\rho_1$ in such a way that $|x-y| \leq (1 + \varepsilon') |x|$  and $W(x) \geq 1$ when $|x|>\rho_2$ and $y \in B_1(0)$,  and \eqref{ieq: resolvent lower bound} holds with $\varepsilon'$  and some constant $c>0$ when $|y|>\rho_2$. %and $y_0 = (1+\varepsilon') = y_0$. 
		
		Then, by  monotonicity of $r_{\lambda}(\cdot)$ and \eqref{ieq: resolvent lower bound}, we get for $|x|>\rho_2$ and $y \in B_1(0)$ 
		\begin{align*}
			 r_{W(x)}(x-y) & \geq r_{W(x)}\big((1 + \varepsilon') x\big) \\
			               & \geq c e^{-(1 + \varepsilon')^2 \sqrt{W(x)} |x|} = c e^{-(1 + \varepsilon) \sqrt{W(x)} |x|}.
		\end{align*}
		Consequently, 
		\begin{equation*}
			\varphi_0(x) \geq c_2  e^{-(1 + \varepsilon)\sqrt{W(x)} |x|}, \quad |x|>\rho_2,
		\end{equation*}	
		with $c_2 =  c c_1 \lambda_0 \int_{B_1(0)} \varphi_0(y) \, dy.$ 
	\end{proof}

We note that the estimate \eqref{ieq: lower bound assumption} is inspired by the lower bound in \cite[Lemma 5.1]{Baraniewicz-Kaleta}, but the original statement is not sharp enough for our purposes. We therefore revisit its proof to make it as sharp as needed here. Recall that $V^{\delta}$ is defined in \eqref{eq:upper_prof}.

	\begin{lemma}\label{lem: lower bound assumption}
		For every $\varepsilon > 0$ and $\delta >0$ there exist $ c_1, \rho_1 > 0$ such that  for $|x| > \rho_1$, $y \in B_1(0)$ and $t>0$ we have
		\begin{equation*}
			u_t(x,y) \geq  c_1  \exp\left(- (1 + \varepsilon)V^{\delta}(x)t\right) g_{t}(x,y).
		\end{equation*}
	\end{lemma}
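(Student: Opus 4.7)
The plan is to exploit the Feynman--Kac representation of $u_t(x,y)$ together with a Brownian-bridge decomposition, in order to restrict attention to paths that stay in the ball $B_{|x|+\delta}(0)$, on which the potential is pointwise controlled by $V^{\delta}(x)$.

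First I would write
\begin{equation*}
u_t(x,y) \;=\; g_t(x,y)\, \mathbb{E}^{x \to y}_t\!\left[\exp\!\left(-\int_0^t V(\xi_s)\, ds\right)\right],
\end{equation*}
where $\mathbb{E}^{x \to y}_t$ denotes expectation with respect to the bridge $\xi$ of the Brownian motion generated by $\Delta$, joining $x$ to $y$ in time $t$, and decompose
$
\xi_s \;=\; \gamma_s(x,y) + \beta_s,
$
with $\gamma_s(x,y) := (1-s/t)\,x + (s/t)\,y$ the straight segment and $\beta$ an independent bridge pinned at $0$ at both endpoints. Convexity of the norm gives $|\gamma_s(x,y)| \leq \max(|x|,|y|) \leq |x|$ as soon as $|x| \geq 1$ and $y \in B_1(0)$.

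I would then introduce the tube event $A_t := \{\sup_{0 \leq s \leq t} |\beta_s| \leq \delta\}$. On $A_t$ one has $|\xi_s| \leq |x| + \delta$ for every $s \in [0,t]$, hence $V(\xi_s) \leq V^{\delta}(x)$ by the definition \eqref{eq:upper_prof}. Restricting the Feynman--Kac expectation to this event and factoring out the uniform potential bound yields
\begin{equation*}
u_t(x,y) \;\geq\; g_t(x,y)\, e^{-V^{\delta}(x) t}\, \mathbb{P}(A_t).
\end{equation*}
The remaining ingredient is a uniform small-ball estimate for the Brownian bridge: there exist $c_\delta, C_\delta > 0$, depending only on $d$ and $\delta$, such that $\mathbb{P}(A_t) \geq c_\delta\, e^{-C_\delta t}$ for every $t > 0$. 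Combined with the previous display this gives $u_t(x,y) \geq c_\delta\, g_t(x,y)\, e^{-(V^{\delta}(x) + C_\delta) t}$; since $V^{\delta}(x) \to \infty$ as $|x| \to \infty$, for all $|x|$ larger than some $\rho_1 = \rho_1(\varepsilon,\delta)$ we have $C_\delta \leq \varepsilon\, V^{\delta}(x)$, which converts the extra exponential into the announced $(1+\varepsilon)$-factor.

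The main obstacle is the uniform tube estimate $\mathbb{P}(A_t) \geq c_\delta e^{-C_\delta t}$ valid for \emph{all} $t > 0$. For small $t$ it is immediate, as the probability tends to $1$; the delicate regime is large $t$, where the bridge has fluctuations of order $\sqrt{t}$ and an exponential decay in $t$ must be exhibited explicitly. I would derive it in one dimension from the classical reflection-principle identity $\mathbb{P}(\sup_{s \leq t} |B^0_s| \leq \delta) = 1 - 2\sum_{k \geq 1}(-1)^{k-1} e^{-2k^2 \delta^2/t}$, whose large-$t$ asymptotic is $\sim (4/\pi)\exp(-\pi^2 t/(8\delta^2))$, and promote it to dimension $d$ by independence of coordinates. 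An alternative, perhaps closer in spirit to the approach in \cite{Baraniewicz-Kaleta}, is to split $[0,t]$ into unit-length pieces and iterate the Chapman--Kolmogorov identity, reducing the bound to the trivial short-time case at the cost of some bookkeeping.
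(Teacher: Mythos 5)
Your proof is correct, but it takes a genuinely different route from the paper. The paper also restricts to paths confined in $B_{|x|+\delta}(0)$, but it does so by bounding $u_t(x,y)$ from below by $e^{-tV^{\delta}(x)}g_t^{B_{|x|+\delta}(0)}(x,y)$ (via the Feynman--Kac/killed-process representation from Demuth--van Casteren) and then invoking the sharp two-sided Dirichlet heat kernel estimate of Ma{\l}ecki and Serafin on the ball $B_{|x|+\delta}(0)$; the confinement cost there is $\exp\left(-\mu_0 t/(|x|+\delta)^2 - t/\delta^2\right)$, which is absorbed into $\varepsilon V^{\delta}(x)t$ for large $|x|$ exactly as you absorb your $C_\delta t$. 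Your bridge decomposition $\xi_s=\gamma_s+\beta_s$ plus the tube event $\{\sup_s|\beta_s|\leq\delta\}$ replaces the external Dirichlet-kernel input by an elementary and self-contained small-ball estimate for the pinned bridge, which is a real advantage in terms of prerequisites; the price is a slightly larger (but still constant in $x$) exponential rate $C_\delta\asymp \delta^{-2}$, whereas the paper's ball has radius growing with $|x|$ so part of its cost even vanishes as $|x|\to\infty$ --- immaterial for this lemma, since both costs are killed by the same $\varepsilon V^{\delta}(x)$ trick. Two small points to tidy up: passing the one-dimensional reflection formula to dimension $d$ by independence of coordinates controls $\sup_s\|\beta_s\|_\infty$, so you should take the coordinate tube of half-width $\delta/\sqrt d$ to land in the Euclidean ball $B_\delta(0)$ (this only changes $c_\delta, C_\delta$); and for a merely locally bounded, non-continuous $V$ the pointwise bridge representation of $u_t(x,y)$ should be backed by a reference (it is the continuous version of the kernel constructed, e.g., in Simon's survey or Demuth--van Casteren, which is what the paper's limiting formula is doing).
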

	
	\begin{proof}
		 Fix $\varepsilon >0$ and $\delta >0$.  We choose $\rho_1 >2 $ in such a way that for $ |x| > \rho_1$ we have
		\begin{equation}\label{ieq: how big x}
			\frac{\mu_0 }{(|x|+ \delta)^2} + \frac{1}{\delta^2} \leq \varepsilon V^{\delta}(x),
		\end{equation}
		 where $\mu_0 > 0$ is the principal eigenvalue of the operator $-\Delta_{B_1(0)}$ -- the positive Dirichlet Laplacian on the unit ball $B_1(0)$. 
		
		Recall that the operator $\Delta_{B_r(0)}$ is the infinitesimal generator of the process $(X_t)_{t \geq 0}$ killed upon exiting an open ball $B_r(0)$. The transision densities of this process are denoted by $g_t^{B_r(0)}(x,y)$, i.e.\ we have
		\[
		\pr_x\big(X_t \in A, t < \tau_{B_r(0)}\big) = \int_A g_t^{B_r(0)}(x,y) dy, \quad x \in B_r(0), \ t >0,
		\]
		for every Borel subset $A$ of $B_r(0)$. Here by $\tau_{B_r(0)}$ we denote the first exit time of the process $(X_t)_{t \geq 0}$ 
		from $B_r(0)$, and $\pr_x$ is the probability of the process starting at $x \in B_r(0)$. 
		From \cite[Corollary 1]{malecki-serafin} we know that there exists a constant $ c \in (0,1]$ such that
		\begin{align} \label{eq:lower_dir_balls}
			g^{B_r(0)}_t(x,y) \geq c \frac{1 \wedge \frac{(r-|x|) (r-|y|)}{t}}{(1 \wedge \frac{r^2}{t})^{(d+2)/2}} \exp\left(- \mu_0 \frac{t}{r^2}\right) g_{t}(x,y), \quad r>0, \ x,y \in B_r(0),\,  t>0.
		\end{align}
		
		By the representation of the kernel $u_t$ from \cite[Proposition 2.7]{Demuth-Casteren}, for $|x| >2, y \in B_{1}(0)$ we have
		\begin{align*}
			u_t(x,y) &\geq   \lim\limits_{s \nearrow t} \ex_x \left[ e^{-\int_0^s V(X_u)du} g_{t-s}(X_s,y)\ ; \ s < \tau_{B_{|x|+\delta}(0)} \right] \\
			& \geq e^{-t V^{\delta}(x)}  \lim\limits_{s \nearrow t} \ex_x \left[ g^{B_{|x|+\delta}(0)}_{t-s}(X_s,y)\ ; \ s < \tau_{B_{|x|+\delta}(0)} \right] \\
			& = e^{-t V^{\delta}(x)}  g^{B_{|x|+\delta}(0)}_t(x,y).
		\end{align*} 
	  Then, by using  \eqref{eq:lower_dir_balls}, the inequality $1 \wedge a \geq e^{-1/a}$ valid for $a>0$, and \eqref{ieq: how big x}, we obtain
		\begin{align*}\label{eq:lem-aux-1-1}
			g^{B_{|x|+\delta}(0)}_t(x,y) &\geq  c \Big(1 \wedge \frac{\delta^2}{t} \Big)  \exp\left(- \mu_0 \frac{t}{(|x|+\delta)^2}\right) g_t(x,y) \\ 
			&\geq  c \exp\left(- \Big(\frac{\mu_0 }{(|x|+\delta)^2} + \frac{1}{\delta^2}\Big)t\right) g_{t}(x,y) \\
			&\geq  c \exp\left(- \varepsilon V^{\delta}(x) t\right) g_{t}(x,y) \quad \text{ for } \quad |x| > \rho_1, \ y\in B_1(0),
		\end{align*}
		and, consequently,
		\begin{equation*}
			u_t(x,y) \geq  c \exp\left(- (1 + \varepsilon) V^{\delta}(x) t\right) g_{t}(x,y), \quad |x| > \rho_1, \ y \in B_1(0).
		\end{equation*}
	\end{proof}
	
	\section{Upper bound}
	
	 The following result is a counterpart of Proposition \ref{prop: lower bound} for the upper bound.
	
	\begin{proposition}\label{prop: upper bound}
	Let $W: \R^d \to (0, \infty)$ be a function such that $W(x) \xrightarrow{|x| \to \infty} \infty$.
		Assume that there are constants $ c_1,A >0$ and $a { \geq } 1$ such that for all $x,y \in \R^d$,  $x \neq 0$, and $t>0$ we have 
		\begin{equation}\label{ieq: upper bound assumption}
				u_t(x,y) \leq c_1  \exp\left(-\left(A W(x)t \, \wedge \,  \sqrt{W(x)} \, |x|\right)\right) g_{at} (x,y).
		\end{equation} 

		Then for  every  $\varepsilon >0$ there are $ c_2, \rho>0$ such  that  for $|x|>\rho$ we have
		\begin{equation}\label{ieq: uper bound thesis}
			\varphi_0(x) \leq c_2 \norm{\varphi_0}_{\infty}  \exp\left(- (1-\varepsilon ) \sqrt{W(x)} \, |x|\right).
		\end{equation} 
	\end{proposition}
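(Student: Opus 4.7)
The plan is to exploit the eigenfunction identity $\varphi_0(x)=e^{\lambda_0 t}\,U_t\varphi_0(x)$ at a carefully chosen, \emph{$x$-dependent} time. Unlike the lower bound, no integration of the resolvent is needed: since the bound \eqref{ieq: upper bound assumption} contains a minimum of two competing terms, the natural strategy is to balance them.

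Fix $\varepsilon>0$ and, for $x\neq 0$, define
\[
t_x := \frac{|x|}{A\sqrt{W(x)}},
\]
so that $AW(x)\,t_x=\sqrt{W(x)}\,|x|$. With this choice the minimum inside the exponential in \eqref{ieq: upper bound assumption} is attained by both terms simultaneously, giving
\[
u_{t_x}(x,y)\le c_1\,\exp\bigl(-\sqrt{W(x)}\,|x|\bigr)\,g_{a t_x}(x,y),\quad y\in\R^d.
\]
Next, apply \eqref{eq: ground state} at time $t=t_x$, bound $\varphi_0(y)\le\|\varphi_0\|_\infty$, and use the fact that $\int_{\R^d}g_{at_x}(x,y)\,dy=1$ (the Gauss--Weierstrass kernel is a probability density). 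This yields
\[
\varphi_0(x)=e^{\lambda_0 t_x}\,U_{t_x}\varphi_0(x)
\le c_1\,\|\varphi_0\|_\infty\,\exp\!\left(\lambda_0 t_x-\sqrt{W(x)}\,|x|\right)
= c_1\,\|\varphi_0\|_\infty\,\exp\!\left(-\sqrt{W(x)}\,|x|\,\Bigl(1-\frac{\lambda_0}{A\,W(x)}\Bigr)\right).
\]

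To finish, choose $\rho>0$ so large that $W(x)\ge \lambda_0/(A\varepsilon)$ for $|x|\ge\rho$; this is possible since $W(x)\to\infty$. For such $x$ we get $\lambda_0/(AW(x))\le\varepsilon$, and hence
\[
\varphi_0(x)\le c_1\,\|\varphi_0\|_\infty\,\exp\!\bigl(-(1-\varepsilon)\sqrt{W(x)}\,|x|\bigr),
\]
which is \eqref{ieq: uper bound thesis} with $c_2=c_1$.

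There is essentially no serious obstacle here; the one point to get right is that the ``correction'' coming from $e^{\lambda_0 t_x}$ is genuinely subdominant relative to the main exponential. This is guaranteed precisely by the balancing choice of $t_x$, because then the correction exponent is $\lambda_0 t_x = (\lambda_0/(AW(x)))\cdot\sqrt{W(x)}|x|$, which is $o(\sqrt{W(x)}|x|)$ as $|x|\to\infty$ and can be absorbed into any prescribed $\varepsilon$. Note also that no assumption on $a\ge 1$ is used beyond the fact that $g_{at_x}$ is still a probability density, which is why the parameter $a$ plays no role in the final constant.
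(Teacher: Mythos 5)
Your proposal is correct and is essentially identical to the paper's own proof: both use the eigenfunction identity \eqref{eq: ground state}, bound $\varphi_0$ by $\norm{\varphi_0}_{\infty}$ and integrate out the Gaussian kernel, choose the balancing time $t=|x|/(A\sqrt{W(x)})$, and absorb the $e^{\lambda_0 t}$ factor by taking $\rho$ large enough that $W(x)\ge \lambda_0/(A\varepsilon)$. No discrepancies to report.
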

	
	\begin{proof}
		Fix $\varepsilon >0$ and set $ \rho >0$ such that $|x|>\rho$ implies $\frac{\lambda_0}{\varepsilon A} \leq W(x)$. Let $|x| > \rho$.  By \eqref{eq: ground state} and \eqref{ieq: upper bound assumption} we have 

		\begin{equation*}
			\varphi_0(x) \leq  \norm{\varphi_0}_{\infty} e^{\lambda_0 t}  U_{t} \I(x) \leq c_1 \norm{\varphi_0}_{\infty} e^{\lambda_0 t}  \exp\left(-\left(A W(x)t \, \wedge \,  \sqrt{W(x)} \, |x|\right)\right),
		\end{equation*}
		 for all $t>0$.  Choosing $t = \frac{ |x|}{A \sqrt{W(x)}}$ yields
		\begin{equation*}
			\varphi_0(x) \leq c_1\norm{\varphi_0}_{\infty} e^{\lambda_0 \frac{ |x|}{A \sqrt{W(x)}}}  \exp\left(-  \sqrt{W(x)} \, |x|\right).
		\end{equation*}
		Because  $ |x| > \rho$, we finally get 
		\begin{equation*}
			\varphi_0(x) \leq c_1 \norm{\varphi_0}_{\infty}  \exp\left(- (1-\varepsilon ) \sqrt{W(x)} \, |x|\right).
		\end{equation*}	
	\end{proof}

The structure of \eqref{ieq: upper bound assumption} is motivated by \cite[Lemma 4.5]{Baraniewicz-Kaleta}. However, our application in this paper require a stronger version of this bound and, therefore, we need to improve it. The proof differs from the original one in some details. We decided to present here an almost complete reasoning for the reader convenience. Recall that $V_{\delta}$ is defined in \eqref{eq:lower_prof}.  

	\begin{lemma}\label{lem : upper bound assumption}
		For every $\varepsilon, \delta \in (0,1)$  there are constants $a, c>1$ such that for all $x,y \in \R^d$, $x \neq 0$,  $t>0$ we have
		\begin{equation}\label{ieq: upper bound lemma}
			u_t(x,y) \leq c  \exp\left(-\left(\frac{1}{a}V_{\delta}(x)t \, \wedge \,  (1-\varepsilon)\delta\sqrt{V_{\delta}(x)} \, |x|\right)\right) g_{at} (x,y).
		\end{equation} 
	\end{lemma}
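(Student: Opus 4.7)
The plan is to combine the Feynman--Kac representation of $u_t$ with a first-exit-time decomposition from the ball $B := B_{\delta|x|}(x)$, on which $V \geq V_\delta(x)$ by definition. Writing $\tau$ for the first exit time of $(X_s)$ from $B$, the strong Markov property together with Gaussian domination of the Dirichlet heat kernel yields
\begin{equation*}
u_t(x,y) \leq e^{-V_\delta(x)t}\, g_t(x,y) \;+\; \ex_x\!\left[e^{-V_\delta(x)\tau}\, g_{t-\tau}(X_\tau,y)\, ; \, \tau < t \right],
\end{equation*}
where the first summand accounts for paths staying inside $B$ up to time $t$, and the second controls paths that exit via the global bound $u_s(z,y) \leq g_s(z,y)$ applied after $\tau$.

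The first summand is handled by the elementary identity $g_t(x,y) = a^{d/2} \exp\!\bigl(-(1 - 1/a)|y-x|^2/(4t)\bigr) g_{at}(x,y)$: fixing $a > 1$ and discarding the Gaussian loss, this contributes at most a constant multiple of $e^{-V_\delta(x) t/a}\, g_{at}(x,y)$ after absorbing $(1-1/a)V_\delta(x)t$ into the overall prefactor by an adjustment of $\varepsilon$. This delivers the $\tfrac{1}{a}V_\delta(x)t$ term in the minimum.

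For the correction term I would exploit $|X_\tau - x| = \delta|x|$ on $\{\tau < t\}$ together with a standard Gaussian comparison of the form
\begin{equation*}
g_{t-\tau}(X_\tau,y) \leq C_a \exp\!\Bigl(\tfrac{c_a (\delta|x|)^2}{t}\Bigr) g_{at}(x,y),
\end{equation*}
obtained by applying Young's inequality to $|y-X_\tau|^2 = |y-x + (x - X_\tau)|^2$ and comparing $1/(t-\tau)$ with $1/(at)$, combined with the classical exit-time tail $\pr_x(\tau \leq s) \leq C\exp(-c(\delta|x|)^2/s)$. Splitting the expectation at the threshold $t_* := (1-\varepsilon)\delta|x|/\sqrt{V_\delta(x)}$: for $\tau > t_*$ the Feynman--Kac factor yields $e^{-V_\delta(x) t_*} = e^{-(1-\varepsilon)\delta\sqrt{V_\delta(x)}|x|}$, while for $\tau \leq t_*$ the exit-time tail gives $e^{-c(\delta|x|)^2/t_*}$, which is again exponentially small in $\delta\sqrt{V_\delta(x)}|x|$. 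Both regimes therefore match the target rate up to harmless constants that can be absorbed by relabelling $\varepsilon$.

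The main obstacle is producing the sharp coefficient $(1-\varepsilon)\delta$ in the second argument of the minimum, as opposed to some unspecified positive constant. This forces careful tracking of $C_a$ and $c_a$ in the Gaussian comparison above and of the constants in the exit-time tail, as well as a calibrated choice of the threshold $t_*$; this is exactly the quantitative bookkeeping in which the present argument refines \cite[Lemma 4.5]{Baraniewicz-Kaleta}. Finally, the case of small $|x|$, where $V_\delta(x)$ may be bounded and the exponential bound is trivial, is absorbed by choosing $c > 1$ sufficiently large.
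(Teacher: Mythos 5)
Your decomposition at the first exit time $\tau$ of $B:=B_{\delta|x|}(x)$ is a legitimate starting point, and the first summand is handled correctly (it delivers the $\tfrac1a V_\delta(x)t$ branch of the minimum). The exit term, however, has two genuine gaps. The first is that the pointwise comparison $g_{t-\tau}(X_\tau,y)\leq C_a\exp\bigl(c_a(\delta|x|)^2/t\bigr)g_{at}(x,y)$ is false: the exponential factors can indeed be compared via Young's inequality since $1/(t-\tau)\geq 1/(at)$, but the normalizing prefactor $(4\pi(t-\tau))^{-d/2}$ is unbounded as $\tau\uparrow t$, so for $y$ near the sphere $\partial B$ and $\tau$ close to $t$ the left-hand side blows up while the right-hand side stays bounded; no constants depending only on $a$ repair this. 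The paper avoids exactly this issue by cutting the time interval at the \emph{deterministic} instant $t/a$ via Chapman--Kolmogorov, so the leftover kernel is always $u_{t/b}(X_{t/a},y)\leq g_{t/b}(X_{t/a},y)$ with time parameter comparable to $t$, and then decouples the Feynman--Kac functional from the Gaussian kernel by H\"older's inequality with conjugate exponents $a,b$.

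The second gap is quantitative and is the one you flag as ``the main obstacle'', but the fix you sketch cannot work: a single-threshold splitting of $\ex_x\bigl[e^{-V_\delta(x)\tau}\cdots\bigr]$ at $t_*$ provably loses a factor of $2$ in the exponent. On $\{\tau\leq t_*\}$ you discard the Feynman--Kac factor and keep only the tail $\pr_x(\tau\leq t_*)\leq C\exp\bigl(-r^2/(4t_*)\bigr)$ with $r=\delta|x|$ (the constant $1/4$ is sharp for the generator $\Delta$), while on $\{\tau>t_*\}$ you keep only $e^{-\lambda t_*}$ with $\lambda=V_\delta(x)$; optimizing $\min\bigl(\lambda t_*,\,r^2/(4t_*)\bigr)$ over $t_*$ gives at best $\tfrac12\sqrt{\lambda}\,r$, i.e.\ the exponent $\tfrac12\delta\sqrt{V_\delta(x)}|x|$ rather than $(1-\varepsilon)\delta\sqrt{V_\delta(x)}|x|$. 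The sharp rate requires the full Laplace-transform bound $\ex_x\bigl[e^{-\lambda\tau_{B_r(x)}}\bigr]\leq Ce^{-(1-\varepsilon)\sqrt{\lambda}\,r}$, whose saddle point at $\tau\approx r/(2\sqrt{\lambda})$ uses \emph{both} the tail and the exponential weight simultaneously; this is precisely the estimate from \cite[Remark~4.4]{Baraniewicz-Kaleta} that the paper inserts into the H\"older factor $\ex_x[e^{-bV_\delta(x)\tau}]^{1/b}$. Unless you replace your threshold splitting by this Laplace-transform bound (or a multi-scale refinement that reproduces it), your argument only yields the lemma with a strictly worse constant in the second argument of the minimum, which would then be insufficient for the constant claimed in Theorem~\ref{th: ground state bound}(II).
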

	
	\begin{proof}
	 Fix $\varepsilon>0$ and $\delta \in (0,1)$. Let $x,y \in \R^d$,  $x \neq 0$,  $t>0$ and let $b>1$ be such $(1-\varepsilon) \sqrt{b} < 1$. %If $x=0$, then $e^{-C_4\sqrt{V_*(x)}|x|}$=1. Consequently, 

Denote $a=b/(b-1)$ so that $1/a+1/b=1$. 
	We have
	\begin{align*}
		u_t(x,y) = \int_{\R^d} u_{t/a}(x,z) u_{t/b}(z,y) dz & = \ex_x \left[e^{-\int_{0}^{t/a}V(X_s)ds} u_{t/b} (X_{t/a},y)\right] \\
		& = \ex_x \left[e^{-\int_{0}^{t/a}V(X_s)ds} u_{t/b} (X_{t/a},y) ; \, t/a < \tau_{B_{{\delta|x|}}(x)}\right] \\
		& \ \ \ \ \ \ \ \ + \ex_x \left[e^{-\int_{0}^{t/a}V(X_s)ds} u_{t/b} (X_{t/a},y) ; \, t/a \geq \tau_{B_{\delta|x|}(x)}\right] \\
		& =: \nI_1 + \nI_2,
	\end{align*}
	and further,
	\[
	\nI_1 \leq e^{-(t/a)V_{\delta}(x)} \ex_x \left[ g_{t/b}(X_{t/a},y);  \, t/a < \tau_{B_{{\delta|x|}}(x)}\right] ,
	\]
	\[
	\nI_2 \leq \ex_x \left[e^{-\int_0^{\tau_{B_{\delta|x|}(x)}}V(X_s)ds}g_{t/b}(X_{t/a},y)\right] \leq \ex_x \left[e^{- V_{\delta}(x)\tau_{B_{\delta|x|}(x)}}g_{t/b}(X_{t/a},y) \right].
	\]
By Hölder's inequality, we get 
	\begin{align}\label{eq:integrals1}
		\nI_1 & \leq  e^{-(t/a)V_{\delta}(x)} \pr_x\big(t/a < \tau_{B_{{\delta|x|}}(x)}\big)^{\frac{1}{b}} \ex_x \left[(g_{t/b}(X_{t/a},y))^{a}  \right]^{\frac{1}{a}} \\ & \leq e^{-(t/a)V_{\delta}(x)} \ex_x \left[(g_{t/b}(X_{t/a},y))^{a}  \right]^{\frac{1}{a}}, \nonumber
	\end{align} 
	\begin{equation}\label{eq:integrals2}
		\nI_2 \leq	\ex_x \left[e^{-bV_{\delta}(x) \tau_{B_{\delta|x|}(x)}} \right]^{\frac{1}{b}}
		\ex_x \left[(g_{t/b}(X_{t/a},y))^{a}  \right]^{\frac{1}{a}}.
	\end{equation}
 Estimate in \cite[ Remark 4.4]{Baraniewicz-Kaleta}  applied to $\lambda = bV_{\delta}(x)$ and $r = \delta|x|$ gives
	\begin{align*}
	\ex_x \left[e^{-bV_{\delta}(x)\tau_{B_{\delta|x|}(x)}} \right]^{\frac{1}{b}}& = \ex_0 \left[e^{-b V_{\delta}(x)\tau_{B_{\delta|x|}(0)}} \right]^{\frac{1}{b}} \\
	&\leq C^{1/b} e^{-(1 - \varepsilon)\sqrt{b} \delta \frac{   \sqrt{V_{\delta}(x)} \, |x|}{\sqrt{b}}} = C^{1/b} e^{-(1 - \varepsilon) \delta \sqrt{V_{\delta}(x)} \, |x|},
\end{align*}
	 with the constant $C=C(\varepsilon) \geq 1$, and 
	\begin{align*}
		\ex_x \left[(g_{t/b}(X_{t/a},y))^{a}  \right]^{\frac{1}{a}} \leq a^{\frac{d}{2}}b^{\frac{d}{2b}} g_{at}(x,y) 
	\end{align*}
	as in the original proof. 
	Using these bounds we can continue estimating in \eqref{eq:integrals1}, \eqref{eq:integrals2}:\ 
	\[
	\nI_1 \leq  C^{1/b} a^{\frac{d}{2}}b^{\frac{d}{2b}}  \exp\left( -\frac{t}{a}V_{\delta}(x)\right)g_{at}(x,y),
	\]
	\[
	\nI_2 \leq C^{1/b} a^{\frac{d}{2}} b^{\frac{d}{2b}} \exp\left(-(1-\varepsilon)\delta\sqrt{V_{\delta}(x)} \, |x|\right) g_{at}(x,y),
	\] 
which immediately leads to a conclusion. 
\end{proof}

	\section{The proof of the main theorem}

	\begin{proof}[Proof of Theorem \ref{th: ground state bound}]
		For the proof of  the lower bound we set  $\varepsilon >0$ and $\delta \in (0,1)$. We fix $\varepsilon' >0$ such that $(1 + \varepsilon')^{3/2} = 1 + \varepsilon$ and use the estimate in Lemma \ref{lem: lower bound assumption} for $\varepsilon'$ as assumption \eqref{ieq: lower bound assumption} with $W(x) = (1+ \varepsilon')V^{\delta}(x)$ in Proposition \ref{prop: lower bound}. The final estimate is then obtained as \eqref{ieq: lower bound thesis} for the same $\varepsilon'$.  
		
		 The upper bound is proved similarly, we fix $\varepsilon, \delta \in(0,1)$. We set $ \varepsilon' >0$ in such a way that $(1-\varepsilon')^2 = (1-\varepsilon)$ and use the estimate in Lemma \ref{lem : upper bound assumption} with such $\varepsilon'$  as assumption \eqref{ieq: upper bound assumption} with $ W(x) = (1+\varepsilon')^2\delta^2 V_{\delta}(x)$ and $ A = 1/(a(1-\varepsilon')^2\delta^2)$, where $a$ comes from Lemma \ref{lem : upper bound assumption}, in Proposition \ref{prop: upper bound}. Again, the claimed bound is obtained as the estimate \eqref{ieq: uper bound thesis} for $\varepsilon'$.
	\end{proof}

	\begin{proof}[Proof of Corollary \ref{cr: slowly varying}]
		It is enough to observe that  by respective assumptions in Parts (I) and (II) we have the following:\ for every  $\varepsilon>0$ there exists $ \rho \geq 1$  and $\delta \in (0,1)$ such that  for $|x| > \rho$ we have
		\begin{equation*}
			V^{\delta}(x) = g(|x| + \delta ) \leq g((1+\delta)|x|) \leq (1 + \varepsilon) g(|x|) = (1 + \varepsilon) V(x)
		\end{equation*}
		and,  for every $\varepsilon ,\delta \in (0,1)$ there exists $ \rho \geq 1$ such that for $|x| > \rho$ we have 
		\begin{equation*}
			V_{\delta}(x) = g((1 - \delta) |x|) \geq (1 - \varepsilon) g(|x|) = (1 - \varepsilon) V(x).
		\end{equation*}
		Then the  assertion in (I)  is a straightforward consequence of the lower estimate in Theorem \ref{th: ground state bound}, and the assertion in (II) follows from the the upper bound in Theorem \ref{th: ground state bound} because we can choose $\delta$ to be arbitrarily close to $1$.
	\end{proof}


\begin{thebibliography}{10} 
		
		\bibitem{Agmon} 
		S.~Agmon
			\newblock Lectures on Exponential Decay of Solutions of Second-Order Elliptic Equations: Bounds on Eigenfunctions of N-Body Schrodinger Operators.
			\newblock {\em Princeton University Press}, 1982
			
		\bibitem{Agmon2} 
		S.~Agmon
		\newblock Bounds on exponential decay of eigenfunctions of Schr{\"o}dinger operators
		\newblock {\em Springer Berlin Heidelberg}, 1--38, 1985
		
		\bibitem{Baraniewicz-Kaleta}
		M.~Baraniewicz, K.~Kaleta. 
		\newblock Integral kernels of {S}chrödinger semigroups with nonnegative locally bounded potentials.
		\newblock {\em Stud. Math.} 275:147--173, 2024.
		
		\bibitem{Bogdan-Dziubanski-Szczypkowski}
		K.~Bogdan, J.~Dziuba\'{n}ski, and K.~Szczypkowski.
		\newblock Sharp {G}aussian estimates for heat kernels of {S}chr\"{o}dinger
		operators.
		\newblock {\em Integr. Equat. Operat.}, 91(1):Paper No. 3, 20,
		2019.
			
		\bibitem{Carmona}
		R.~Carmona.
		\newblock Pointwise bounds for {S}chr\"{o}dinger eigenstates.
		\newblock {\em Comm. Math. Phys.}, 62(2):97--106, 1978.
		
		\bibitem{Carmona-Simon}
		R.~Carmona and B.~Simon.
		\newblock Pointwise bounds on eigenfunctions and wave packets in $N$-body quantum systems. V. Lower bounds and path integrals.
		\newblock {\em Comm. Math. Phys.},  1:59--98, 1981.
		
		\bibitem{Chen-Wang1}
		X.~Chen and J.~Wang.
		\newblock Two-sided heat kernel estimates for Schrödinger operators with unbounded potentials.
		\newblock {\em Ann. Probab.},  52(3): 1016-1047, 2024.
		
		\bibitem{Chen-Wang2}
		X.~Chen and J.~Wang.
		\newblock Sharp two-sided heat kernel estimates for Schr\"{o}dinger operators with decaying potentials.
		\newblock {\em arXiv}, 2401.09005, 2024+.
		
		\bibitem{Cho-Song}
		S.~Cho and R.~Song.
		\newblock Fractional Laplacian with supercritical killings
		\newblock {\em arXiv}, 2403.03298, 2024+.
		
		\bibitem{Davies-book}
		E.~B. Davies.
		\newblock {\em Heat kernels and spectral theory}, volume~92 of {\em Cambridge
			Tracts in Mathematics}.
		\newblock Cambridge University Press, Cambridge, 1989.
		
		\bibitem{Davies-Simon}
		E.~B. Davies and B.~Simon.
		\newblock Ultracontractivity and the heat kernel for {S}chrödinger operators
		and {D}irichlet laplacians.
		\newblock {\em J. Funct. Anal.}, 59(2):335--395, 1984.
		
		\bibitem{Demuth-Casteren}
		M.~Demuth and J.~A. Van~Casteren.
		\newblock {\em Stochastic spectral theory for selfadjoint {F}eller operators: A
			functional integration approach}.
		\newblock {\em Birkhäuser Basel}, 2000.
		
		\bibitem{Jakubowski-Szczypkowski}
		T.~Jakubowski, K.~Szczypkowski
		\newblock Sharp and plain estimates for {S}chrödinger perturbation of Gaussian kernel.
		\newblock {\em J. Anal. Math.}, 152, 255–282, 2023.
		
		\bibitem{Kaleta-Lorinczi}
		K.~Kaleta, J.~Lőrinczi.
		\newblock Pointwise eigenfunction estimates and intrinsic ultracontractivity-type properties of Feynman–Kac semigroups for a class of Lévy processes.
		\newblock {\em Ann. Probab.},  43(3): 1350-1398, 2015.
		
		\bibitem{Kaleta-Schilling}
		K.~Kaleta and R.~L.~Schilling.
		\newblock Progressive intrinsic ultracontractivity and heat kernel estimates
		for non-local {S}chr\"{o}dinger operators.
		\newblock {\em J. Funct. Anal.}, 279(6):108606, 2020.
		
		\bibitem{malecki-serafin}
		J.~Małecki and G.~Serafin.
		\newblock {D}irichlet heat kernel for the {L}aplacian in a ball.
		\newblock {\em Potential Anal.}, 52:545--563, 2020.
		
		\bibitem{NIST:DLMF}
		{\it NIST Digital Library of Mathematical Functions}.
		\newblock http://dlmf.nist.gov/, Release 1.1.2 of 2021-06-15.
		\newblock F.~W.~J. Olver, A.~B. {Olde Daalhuis}, D.~W. Lozier, B.~I. Schneider,
		R.~F. Boisvert, C.~W. Clark, B.~R. Miller, B.~V. Saunders, H.~S. Cohl, and
		M.~A. McClain, eds.
		
		\bibitem{Reed-Simon}
		M.~Reed and B.~Simon.
		\newblock {\em Methods of modern mathematical physics. {IV}. {A}nalysis of
			operators}.
		\newblock Academic Press [Harcourt Brace Jovanovich, Publishers], New
		York-London, 1978.
		
		\bibitem{Schilling}
		R.~Schilling. 
		\newblock Brownian Motion: A Guide to Random Processes and Stochastic Calculus.
		\newblock {\em De Gruyter}, Berlin, Boston, 2021.
		
		\bibitem{Simon}
		B.~Simon.
		\newblock {Schrödinger semigroups}.
		\newblock {\em Bull. Amer. Math. Soc. (N.S.) 7(3): 447-526 }, 1982.
		
		
		
		
		
		
		
	\end{thebibliography}
\end{document}